\def \[{\begin{equation}}
\def \]{\end{equation}}
\newtheorem{thm}{Theorem}[section]
\newtheorem{Case}{Case}
\newtheorem{fact}{Fact}
\newtheorem{lem}[thm]{Lemma}
\newtheorem{Obs}{Observation}
\newenvironment{kst}
{\setlength{\leftmargini}{2\parindent}
 \begin{itemize}
 \setlength{\itemsep}{-1.1mm}}
{\end{itemize}}
\newenvironment{wst}
{\setlength{\leftmargini}{1.5\parindent}
 \begin{itemize}
 \setlength{\itemsep}{-1.1mm}}
{\end{itemize}}
\begin{document}

\title{\bf On the maximum Zagreb indices of bipartite graphs with given connectivity}

\author{Xiaocong He$^{a,}$\footnote{Corresponding author},\ \  Xiaobo He$^b$}
\date{}

\maketitle

\begin{center}
$^a$ School of Mathematics and Statistics, Central South University, New Campus, \\ Changsha, Hunan, 410083, P.R. China\\[5pt]
hexc2018@qq.com (X.C.~He)
\medskip

$^b$ School of Architectural Engineering, Huanggang Normal University, \\ Huanggang, Hubei, 438000, P.R. China\\
1459313778@qq.com (X.B. He)
\end{center}

\begin{abstract}
The first Zagreb index $M_{1}$ of a graph is defined as the sum of the square of every vertex degree, and the second Zagreb index $M_{2}$ of a graph is defined as the sum of the product of vertex degrees of each pair of adjacent vertices. In this paper, we study the Zagreb indices of bipartite graphs of order $n$ with $\kappa(G)=k$ (resp. $\kappa'(G)=s$) and sharp upper bounds are obtained for $M_1(G)$ and $M_2(G)$ for $G\in \mathcal{V}^k_n$ (resp. $\mathcal{E}^s_n$), where $\mathcal{V}^k_n$ is the set of bipartite graphs of order $n$ with $\kappa(G)=k$, and $\mathcal{E}^s_n$ is the set of bipartite graphs of order $n$ with $\kappa'(G)=s$.
\end{abstract}

\vspace{2mm} \noindent{\bf Keywords}: First Zagreb index; Second Zagreb index; Connectivity; Edge connectivity

\vspace{2mm}

\setcounter{section}{0}
\section{Introduction}\setcounter{equation}{0}
In this paper,  we only consider simple and undirected graphs. Let $G=(V, E)$ be a graph with the vertex set $V(G)$ and the edge set $E(G)$. Then $G-v,\, G-uv$ denote the graph obtained from $G$ by deleting vertex $v \in V$, or edge $uv \in E$, respectively. This notation is naturally extended if more than one vertex or edge is deleted. Similarly, $G+uv$ is obtained from $G$ by adding an edge $uv\notin E(G)$. Denote by $|U|$ the cardinality of the set $U$. For a vertex subset $S$ of $V(G)$, $G[S]$ consists of $S$ and all edges in $E(G)$ whose endvertices are contained in $S$. If $G$ is connected and for a set $W$ of vertices (edges), $G-W$ is disconnected, then we say $W$ is a \textit{vertex (edge) cut set} of $G$. In order to formulate our results, we need some graph-theoretical notations and terminologies. For other undefined ones, the reader is refereed to \cite{06}.

For $u\in V(G)$, we denote its neighborhood and the degree by $N_G(u)$ and $d_G(u)$, respectively. Let $\delta(G)=\max\{d_G(u): u\in V(G)\}$. Recall that $G$ is called \textit{k-connected} if $|G|>k$ and $G-X$ is connected for every set $X\subseteq V(G)$ with $|X|<k$. The greatest integer $k$ such that $G$ is $k$-connected is the connectivity $\kappa(G)$ of $G$. Thus, $\kappa(G)=0$ if and only if $G$ is disconnected or $K_1$, and $\kappa(K_n)=n-1$ for all $n\geq1$. Analogously, if $|G|>1$ and $G-E'$ is connected for every edge set $E'\subseteq E(G)$ of fewer than $s$ edges, then $G$ is called \textit{s-edge-connected}. The greatest integer $s$ such that $G$ is $s$-connected is the edge-connectivity $\kappa'(G)$ of $G$. In particular, $\kappa'(G)=0$ if $G$ is disconnected. If $G$ is a graph of order $n$, we may have the following fact.
\begin{kst}
\item[{\rm (1)}] $\kappa(G)\leq\kappa'(G)\leq \delta(G)\leq n-1$, and
\item[{\rm (2)}] $\kappa(G)=n-1$, $\kappa'(G)=n-1$ and $G\cong K_n$ are equivalent.
\end{kst}

A bipartite graph $G$ is a simple graph, whose vertex set $V(G)$ can be partitioned into two disjoint subsets $X$ and $Y$ such that every edge of $G$ joins a vertex of $X$ with a vertex of $Y$. A bipartite graph in which every two vertices from different partition classes are adjacent is called complete, which is denoted by $K_{p,q}$, where $p=|X|$, $q=|Y|$. In particular, let $K_{p,0}$, ($p\geq1$) denote $pK_1$.

Let $\mathcal{V}^k_n$ (resp. $\mathcal{E}^s_n$) be the class of all $n$-vertex bipartite graphs with connectivity $k$ (resp. edge-connectivity $s$). Let $n,k$ and $r$ be three positive integers such that $n\geq r\geq k$. We define a graph $O_k\vee_1(K_1\bigcup K_{n-r-1,r-k})$, where $\bigcup$ is the union of two graphs, $O_k$ ($k\geq1$) is an empty graph of order $k$ and $\vee_1$ is a graph operation that join all the vertices in $O_k$ to the vertex of $K_1$ and to the vertices belonging to the partition of cardinality $n-r-1$ in $K_{n-r-1,r-k}$ respectively.

The \textit{first Zagreb index} $M_1=M_1(G)$ and the \textit{second Zagreb index} $M_2=M_2(G)$ of the graph $G$ are defined, respectively, as:
\begin{align*}
M_{1}=M_{1}(G)=\sum_{u\in V(G)}d^2(u),\ \ \ M_{2}=M_{2}(G)=\sum_{uv\in E(G)}d(u)d(v).
\end{align*}
In 1972, Gutman and Trinajsti\'c \cite{1972-Gutman-Trinajstic} discovered the quantities $M_1$ and $M_2$ in certain approximate
expressions for the total $\pi$-electron energy. The name \textit{Zagreb index} (or, \textit{Zagreb group index}) seems to be first
used in the survey article \cite{1983-Balaban} and after that became standard. In fact, these graph invariants
were proposed in 1975 as measures of branching of the carbon atom skeleton (see \cite{2}). In the past ten years the researchers mainly focused on the bounds or mathematical properties of $M_1, M_2$, one may be referred to \cite{07,2010-FLH,2011-HL,2011-LZ,2011-LZ1,2010-LZH,6,2010-LZ,7} and the references with in. Especially, Liu and Gutman \cite{LG} determined the upper bounds for Zagreb indices of connected graphs; Feng and Ili\'{c} \cite{2010-F} determined sharp upper bounds for Zagreb indices $M_1$ and $M_2$ of graphs with a given matching number; Li and Zhou \cite{2010-LZH} determined the upper and lower bounds for Zagreb indices of graphs with connectivity at most $k$; Li and Zhang determined sharp upper bounds for Zagreb indices of bipartite graphs with a given diameter; see \cite{2011-LZ}. For a survey of chemical applications
and mathematical properties of the Zagreb indices, we refer the reader to \cite{2017-BDG,2004-IGD,2013-IGT,2003-NKMT,7}.

Motivated by \cite{2010-LZH}, in this paper we determine the maximum values on $M_1$ and $M_2$ in the case of $n$-vertex bipartite graphs, which is an important class of graphs in graph theory. Based on the structure of bipartite graphs, sharp upper bounds on $M_1$ and $M_2$ among $\mathcal{V}^k_n$ (resp. $\mathcal{E}^s_n$) are determined. The corresponding extremal graphs are identified, respectively.
\section{Lemmas and results}
In this section, we shall determine the sharp upper bounds for $M_1(G)$ and $M_2(G)$ of bipartite graph $G\in \mathcal{V}^k_n$ (resp. $\mathcal{E}^s_n$).

By the definition of Zagreb indices, we obtain the following lemma immediately.
\begin{lem}\label{lem2.1}
Let $u,v$ be two non-adjacent vertices of a connected graph $G$. We have
$$M_1(G+uv)>M_1(G), \ \ \ M_2(G+uv)>M_2(G).$$
\end{lem}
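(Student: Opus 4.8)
The plan is to show both inequalities by directly comparing the degree sequences of $G$ and $G+uv$, since adding the edge $uv$ changes only the degrees of $u$ and $v$ (each increasing by one) and leaves every other vertex degree unchanged. Write $d = d_G(\cdot)$ for degrees in $G$ and $d' = d_{G+uv}(\cdot)$ for degrees in $G+uv$. Then $d'(u) = d(u)+1$, $d'(v) = d(v)+1$, and $d'(w) = d(w)$ for all $w \neq u,v$.

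For the first Zagreb index the argument is immediate. We compute
\[
M_1(G+uv) - M_1(G) = \bigl((d(u)+1)^2 - d(u)^2\bigr) + \bigl((d(v)+1)^2 - d(v)^2\bigr) = 2d(u) + 2d(v) + 2,
\]
which is strictly positive since all degrees are nonnegative. Hence $M_1(G+uv) > M_1(G)$.

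For the second Zagreb index the comparison is slightly more delicate because $M_2$ is a sum over edges and the contribution of an edge depends on the degrees of both its endpoints. The key step is to split the sum according to which edges are incident to $u$ or $v$. First, the new edge $uv$ itself contributes $d'(u)d'(v) = (d(u)+1)(d(v)+1) > 0$, a term absent from $M_2(G)$. Next, for every edge $e = uw$ incident to $u$ (with $w \neq v$), its contribution changes from $d(u)d(w)$ to $(d(u)+1)d(w)$, an increase of $d(w) \geq 0$; the symmetric statement holds for edges incident to $v$. All remaining edges, incident to neither $u$ nor $v$, have both endpoint degrees unchanged and so contribute identically to both sums. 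Summing these nonnegative increments together with the strictly positive contribution of the new edge $uv$ yields
\[
M_2(G+uv) - M_2(G) = (d(u)+1)(d(v)+1) + \sum_{w \in N_G(u)} d(w) + \sum_{w \in N_G(v)} d(w) > 0,
\]
which establishes the second inequality.

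The main point requiring care is the bookkeeping in the $M_2$ computation: one must be certain that edges are not double-counted and that the edge $uv$ (which is present in $G+uv$ but not in $G$) is accounted for exactly once as a genuinely new term. Since every increment in the displayed difference is nonnegative and the new-edge term is strictly positive, the strict inequality follows regardless of the structure of $G$. Connectedness of $G$ is not actually needed for the inequalities themselves; it is stated only because the lemma will subsequently be applied within the connected graphs arising in the extremal analysis.
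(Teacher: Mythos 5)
Your proposal is correct: the paper offers no written proof at all, stating only that the lemma follows ``by the definition of Zagreb indices \ldots immediately,'' and your explicit degree-bookkeeping computation is precisely the direct verification the paper has in mind. Your closing remark that connectedness is not needed for the inequalities is also accurate.
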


\begin{lem}[\cite{2010-LZH}]\label{lem2.2}
Let $G$ be a connected graph and $u, v$ be two vertices of $G$. Suppose $v_1, v_2, \ldots, v_s\in N(v)\backslash N(u)$, $1\leq s\leq d(v)$. Let $G^*=G-\{vv_1, vv_2, \ldots, vv_s\}+\{uv_1, uv_2, \ldots, uv_s\}$. If $d(u)\geq d(v)$ and $u$ is not adjacent to $v$, then
\begin{align*}
\text{$M_1(G^*)>M_1(G)$\ \ \ \ and \ \ \ \ $M_2(G^*)>M_2(G).$}
\end{align*}
\end{lem}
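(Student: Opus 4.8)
The plan is to exploit that the transformation is local: passing from $G$ to $G^*$ raises $d(u)$ by $s$, lowers $d(v)$ by $s$, and leaves every other degree unchanged, since each $v_i$ merely trades its edge to $v$ for an edge to $u$, so that $d_{G^*}(v_i)=d_G(v_i)$. First I would record these degree changes, and then compute each index by comparing only the terms that are actually affected.

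For $M_1$ only the squares of $d(u)$ and $d(v)$ move, so
\begin{align*}
M_1(G^*)-M_1(G)=(d(u)+s)^2+(d(v)-s)^2-d(u)^2-d(v)^2=2s\,(d(u)-d(v))+2s^2,
\end{align*}
which is strictly positive because $d(u)\geq d(v)$ and $s\geq 1$. This disposes of the $M_1$ assertion at once and needs no further input.

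For $M_2$ I would account for the change edge by edge, splitting the affected edges into four groups: the edges at $u$ that survive, the new edges $uv_i$, the deleted edges $vv_i$, and the edges at $v$ that survive. Adding up the four contributions gives
\begin{align*}
M_2(G^*)-M_2(G)&=s\sum_{w\in N(u)}d(w)+\big(d(u)-d(v)+s\big)\sum_{i=1}^{s}d(v_i)\\
&\quad-s\!\!\sum_{w\in N(v)\setminus\{v_1,\dots,v_s\}}\!\!d(w).
\end{align*}
Since every common neighbour of $u$ and $v$ occurs both in $N(u)$ and in $N(v)\setminus\{v_1,\dots,v_s\}$, the corresponding contributions to the two neighbour-sum terms cancel, leaving the difference between the degree-sum of the private neighbours of $u$ and that of the private neighbours of $v$ that were not moved.

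The sign of this $M_2$-difference is the crux and the step I expect to be the main obstacle. The term carrying $\sum_i d(v_i)$ is positive, using $d(u)\geq d(v)$ and $s\geq 1$, but the leftover difference of neighbour degree-sums need not be: $d(u)\geq d(v)$ bounds only the \emph{number} of private neighbours of $u$ from below, not their degrees. The situation is clean precisely when $\{v_1,\dots,v_s\}$ exhausts $N(v)\setminus N(u)$, for then the subtracted sum runs only over common neighbours and the whole expression collapses to $s\sum_{w\in N(u)\setminus N(v)}d(w)+|N(u)\setminus N(v)|\sum_{i=1}^{s}d(v_i)>0$, the factor $|N(u)\setminus N(v)|$ being positive by $d(u)\geq d(v)$ and $s\geq 1$. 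I would therefore concentrate the effort on controlling the residual private-neighbour degree-sum of $v$; this is where the argument is delicate, since a bare appeal to $d(u)\geq d(v)$ does not by itself force the $M_2$-difference to be positive, and one naturally needs the moved neighbours to account for essentially all of $v$'s private neighbours.
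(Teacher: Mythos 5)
Your proposal cannot be compared against an in-paper argument, because the paper gives none: Lemma~\ref{lem2.2} is imported verbatim from \cite{2010-LZH} without proof. Judged on its own, what you wrote is correct as far as it goes. The degree bookkeeping, the $M_1$ computation $M_1(G^*)-M_1(G)=2s\,(d(u)-d(v))+2s^2>0$, and the $M_2$ increment formula
\begin{align*}
M_2(G^*)-M_2(G)=s\sum_{w\in N(u)}d(w)+\bigl(d(u)-d(v)+s\bigr)\sum_{i=1}^{s}d(v_i)-s\!\!\sum_{w\in N(v)\setminus\{v_1,\dots,v_s\}}\!\!d(w)
\end{align*}
are all right, and your refusal to wave away the last term is the correct instinct, not excessive caution.

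In fact the obstacle you flagged is fatal to the statement as it stands: for a proper subset $\{v_1,\dots,v_s\}\subsetneq N(v)\setminus N(u)$ the $M_2$ inequality can fail. Take $G$ with vertices $u$, $v$, a common neighbour $c$, two pendant vertices $x_1,x_2$ attached to $u$, a pendant vertex $v_1$ attached to $v$, a vertex $w$ adjacent to $v$, and three pendant vertices attached to $w$. This graph is connected, $u\not\sim v$, $d(u)=d(v)=3$, and $v_1\in N(v)\setminus N(u)$. Moving only $v_1$ (so $s=1$) gives, by direct count, $M_2(G)=45$ but $M_2(G^*)=44$; equivalently, your formula evaluates to $4+1-6=-1<0$ (while $M_1$ does increase, from $44$ to $46$). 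So no argument can close the gap: the hypothesis $d(u)\geq d(v)$ genuinely does not control the residual private-neighbour degree-sum of $v$. The lemma is true exactly in the regime you isolated, namely when $\{v_1,\dots,v_s\}=N(v)\setminus N(u)$, where your collapsed expression $s\sum_{w\in N(u)\setminus N(v)}d(w)+|N(u)\setminus N(v)|\sum_{i=1}^{s}d(v_i)>0$ (using $|N(u)\setminus N(v)|\geq |N(v)\setminus N(u)|=s\geq1$) finishes the proof. That full-move situation is also how the lemma is actually invoked later in the paper: in the proof of Lemma~\ref{lem2.3} the moved set $V(H_2)\cap Y$ is precisely $N_G(x_2)\setminus N_G(x_1)$. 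The correct repair is therefore to add the hypothesis $\{v_1,\dots,v_s\}=N(v)\setminus N(u)$ (or $s=|N(v)\setminus N(u)|$) to the statement, after which your argument is a complete and correct proof.
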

Let $\mathcal{A}^k_n=\{G$: $G=(X, Y)\in \mathcal{V}^k_n$ and there exists a vertex cut set $S$ of order $k$ in $G$ such that $S\bigcap X\neq\emptyset$ and $S\bigcap Y\neq\emptyset$\}.
\begin{lem}\label{lem2.3}
If $G\in\mathcal{A}^k_n$, then $G$ cannot be the graph with the maximum $M_1(G)$ and $M_2(G)$ in $\mathcal{V}^k_n$.
\end{lem}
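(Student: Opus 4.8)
Suppose, for contradiction, that some $G\in\mathcal A^k_n$ attains the maximum value of $M_1$ over $\mathcal V^k_n$; the argument for $M_2$ is identical. Fix a bipartition $(X,Y)$ of $G$ and a vertex cut $S$ of order $k$ with $S_X:=S\cap X\neq\emptyset$ and $S_Y:=S\cap Y\neq\emptyset$, and put $p=|S_X|\ge1$, $q=|S_Y|\ge1$, so that $p+q=k$. Since $G-S$ is disconnected, I would group its components into two nonempty parts with vertex sets $V_1,V_2$ such that no edge of $G$ joins $V_1$ to $V_2$, and write $X_i=V_i\cap X$, $Y_i=V_i\cap Y$, $a_i=|X_i|$, $b_i=|Y_i|$. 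Because the connectivity never exceeds the minimum degree, every vertex of $G$ has degree at least $k$; reading this off the degrees of the classes $X_i$ (which is $b_i+q$) and $Y_i$ (which is $a_i+p$) forces $a_1,a_2\ge q$ and $b_1,b_2\ge p$, a fact I will invoke at the very end.

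First I would densify each side as far as the class permits. If $xy$ is a bipartite non-edge with both endpoints in $V_1\cup S$ (or both in $V_2\cup S$), then $G+xy$ is still bipartite and $S$ still separates $V_1$ from $V_2$, so $\kappa(G+xy)=k$ and $G+xy\in\mathcal V^k_n$; by Lemma~\ref{lem2.1} this strictly increases $M_1$, contradicting maximality. Hence $G[V_1\cup S]$ and $G[V_2\cup S]$ are complete bipartite, i.e. $S_X$ is joined to all of $Y_1\cup Y_2\cup S_Y$, $S_Y$ to all of $X_1\cup X_2\cup S_X$, and each $G[V_i]$ is complete bipartite; the six degrees are then $b_1+b_2+q$ on $S_X$, $a_1+a_2+p$ on $S_Y$, and $b_i+q$, $a_i+p$ on $X_i$, $Y_i$. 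At this point the only bipartite non-edges that remain run between $V_1$ and $V_2$, and adding any of them destroys $S$ as a separator, so Lemma~\ref{lem2.1} can no longer certify membership in $\mathcal V^k_n$. The improvement must therefore come from a global rearrangement rather than a further local move.

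The decisive step is to trade the straddling cut for a one-sided one on the same vertex set. I would build $G'$ by keeping the sizes of the classes $X$ and $Y$, choosing a cut $S'\subseteq X$ with $|S'|=k$, letting one block be a single vertex $y^\ast\in Y$ adjacent to exactly $S'$, and letting the other block be the complete bipartite graph with parts $S'\cup X_1'$ and $Y_1'$, where $|X_1'|=a_1+a_2-q$ and $|Y_1'|=b_1+b_2+q-1$. A direct count gives $|V(G')|=n$, and since $a_1,a_2\ge q$ and $b_1,b_2\ge p\ge1$ we obtain $|X_1'|\ge q\ge1$ and $|Y_1'|\ge 2p+q-1\ge k$; consequently the large block is $k$-connected while $y^\ast$ has degree exactly $k$, so $\kappa(G')=k$ and $G'\in\mathcal V^k_n$. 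In $G'$ all $k$ vertices of $S'$ carry the degree $b_1+b_2+q$ (in $G$ only $p$ vertices did), and the $b_1+b_2+q-1$ vertices of $Y_1'$ carry the degree $a_1+a_2+p$ (in $G$ only $q$ vertices did), so the top degrees of $G$ are now distributed over strictly more vertices. Comparing the two indices then gives $M_1(G')>M_1(G)$ (and, by the same construction, $M_2(G')>M_2(G)$), contradicting the choice of $G$.

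The genuine labour is hidden in this last inequality. One must expand $M_1(G')-M_1(G)$ and $M_2(G')-M_2(G)$ as polynomials in $a_1,a_2,b_1,b_2,p,q$ and check strict positivity throughout the admissible range, and it is precisely the constraints $a_i\ge q$ and $b_i\ge p$ (coming from $\kappa\le\delta$) that dispose of the boundary regimes where a block degenerates. I expect this bookkeeping, together with the verification that $\kappa(G')$ does not fall below $k$, to be the only real obstacle; everything else reduces to Lemma~\ref{lem2.1} and the elementary degree bound.
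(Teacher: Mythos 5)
Your structural phase is sound and matches the paper: densifying inside $V_1\cup S$ and $V_2\cup S$ via Lemma \ref{lem2.1} is exactly the paper's Facts 1--3, and the degree bounds $a_i\ge q$, $b_i\ge p$ obtained from $\kappa\le\delta$ are correct. The gap is in your decisive step. You pass from $G$ to the one-sided-cut graph $G'$ and claim $M_1(G')>M_1(G)$ and $M_2(G')>M_2(G)$, but you never prove this; you explicitly defer it as ``bookkeeping.'' That single comparison is the entire mathematical content of the lemma, and the substitute justification you offer --- that the top degrees $b_1+b_2+q$ and $a_1+a_2+p$ are carried by more vertices in $G'$ --- is not a proof. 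For $M_1$ it ignores that all remaining degrees change too (the vertices of $X_1'$ get degree $b_1+b_2+q-1$ while in $G$ the $X_i$-vertices had $b_i+q$, and one vertex drops to degree $k$), so one genuinely must verify positivity of a polynomial in the six parameters $a_1,a_2,b_1,b_2,p,q$ over the whole admissible range; for $M_2$, which is not a function of the degree sequence at all, the phrase ``by the same construction'' carries no force. Until that inequality is established, the proof is incomplete.

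The paper avoids this computation entirely, and the tool it uses is the one your plan never invokes: Lemma \ref{lem2.2}. After the densification step, take $x_1\in X_1$ and $x_2\in X_2$ with, say, $d_G(x_1)\ge d_G(x_2)$, and shift all edges joining $x_2$ to $Y_2$ over to $x_1$; Lemma \ref{lem2.2} gives a strict increase of both $M_1$ and $M_2$ with no computation, the new graph is still bipartite and connected (as $x_2$ keeps its $q$ neighbours in $S_Y$), and $x_2$ now has degree $q<k$. One then pads $x_2$ back up to degree $k$ and completes all other bipartite adjacencies, which by Lemma \ref{lem2.1} again strictly increases both indices and produces a graph in $\mathcal{V}^k_n\setminus\mathcal{A}^k_n$ --- the desired contradiction. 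Note also that this answers the obstacle you raise (``adding any edge between $V_1$ and $V_2$ destroys $S$ as a separator''): the intermediate graphs in this chain need not lie in $\mathcal{V}^k_n$ at all; only the final graph must, since one only needs to exhibit a single member of $\mathcal{V}^k_n$ beating $G$. To salvage your one-shot comparison you would either have to carry out the six-variable expansion in full for both indices, or realize your $G'$ as the end of such a monotone chain of Lemma \ref{lem2.1}/Lemma \ref{lem2.2} moves --- which is precisely the paper's route.
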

\begin{proof}
Choose $G=(X,Y)\in\mathcal{A}^k_n$ such that $M_1(G)$ and $M_2(G)$ are as large as possible. By the definition of $\mathcal{A}^k_n$, suppose that $S=\{v_1,v_2,\ldots,v_k\}$ is a vertex cut set of order $k$ in $G$ such that $S\bigcap X\neq\emptyset$ and $S\bigcap Y\neq\emptyset$. We write $S_X=S\bigcap X$, $S_Y=S\bigcap Y$, $S^c_X=X-S_X$ and $S^c_Y=Y-S_Y$.

First we show the following facts.
\begin{fact}
$G[S_X\bigcup S_Y]$ is a complete bipartite graph with partitions $(S_X,S_Y)$.
\end{fact}
{\bf Proof of Fact 1.}~
Suppose that, on the contrary, there exist $v_i\in S_X$ and $v_j\in S_Y$ such that $v_iv_j\notin E(G)$. By Lemma \ref{lem2.1}, we have $M_1(G+v_iv_j)>M_1(G)$ and $M_2(G+v_iv_j)>M_2(G)$, a contradiction, since $G+v_iv_j$ is also a bipartite graph with connectivity $k$.

This completes the proof of Fact 1.\qed

With the similar argument as in the proof of Fact 1, we can showing the following fact.
\begin{fact}
For every component $H$ of $G-S$, $G[V(H)\bigcup S]$ is a complete bipartite graph.
\end{fact}
\begin{fact}
$G-S$ consists of two complete bipartite graphs.
\end{fact}
{\bf Proof of Fact 3.}~
Suppose that, on the contrary, $G-S$ contains three connected components, say $H_1, H_2, H_3$. Now we show that $S^c_X\neq \emptyset$ and $S^c_Y\neq \emptyset$. Otherwise, if $S^c_X=\emptyset$, then $S_X=X$ is a vertex cut set of $G$ with order $|S_X|<k=\kappa(G)$, a contradiction. Similarly, we have $S^c_Y\neq \emptyset$. Without loss of generality, assume that $V(H_1)\bigcap X\neq\emptyset$ and $V(H_2)\bigcap Y\neq\emptyset$. Let $x\in V(H_1)\bigcap X$ and $y\in V(H_2)\bigcap Y$. For $H_3$, if there exists $z\in V(H_3)\bigcap X$, then we have $G+zy\in \mathcal{V}^k_n$. By Lemma \ref{lem2.1}, $M_1(G+zy)>M_1(G)$ and $M_2(G+zy)>M_2(G)$, a contradiction. Hence, there exists $z\in V(H_3)\bigcap Y$. It is easy to see that $G+zx\in \mathcal{V}^k_n$. By Lemma \ref{lem2.1}, $M_1(G+zx)>M_1(G)$ and $M_2(G+zx)>M_2(G)$, a contradiction. Hence, $G-S$ consists of two complete bipartite graphs.\qed

By Fact 3, $G-S$ consists of two complete bipartite graphs, say $H_1$ and $H_2$. Since $G$ is $k$-connected, we have $H_i\bigcap X\neq\emptyset$ and $H_i\bigcap Y\neq\emptyset$ for $i=1,2$. Let $x_1\in H_1\bigcap X$ and $x_2\in H_2\bigcap X$. By Claim 2, we have $N_G(x_1)=(V(H_1)\bigcap Y)\bigcup S_Y$ and $N_G(x_2)=(V(H_2)\bigcap Y)\bigcup S_Y$. If $d_G(x_1)\geq d_G(x_2)$, set $G^*=G-\{x_2y| y\in V(H_2)\bigcap Y\}+\{x_1y| y\in V(H_2)\bigcap Y\}$. By Lemma \ref{lem2.2}, we have $M_1(G^*)>M_1(G)$ and $M_2(G^*)>M_2(G)$. Clearly, $G^*$ is a bipartite graph with $d_{G^*}(x_2)= |S_Y|<k$. Note that $|Y|\geq k$, then we choose an arbitrary $(k-|S_Y|)$ subset of $V(H_1)\bigcap Y$, say $A$. Let
$$G'=G^*+\{x_2y: y\in A\}+\{uv: u\in X\backslash\{x_2\}, y\in Y\}.$$
It is routine to check that $G'\in \mathcal{V}^k_n\backslash \mathcal{A}^k_n$. By Lemma \ref{lem2.1}, we have $M_1(G')>M_1(G^*)>M_1(G)$ and $M_2(G')>M_2(G^*)>M_2(G)$. If $d_G(x_1)< d_G(x_2)$, then the discussion is similar, we omit its procedure here.

This completes the proof.
\end{proof}

\begin{lem}\label{lem2.4}
Choose $G=(X,Y)\in\mathcal{V}^k_n$ such that its $M_1(G)$, $M_2(G)$ are as large as possible. If $S$ is a vertex cut set of order $k$ in $G$ such that $S\subseteq X$ and $|X|>k$, then $G-S$ consists of two components, one of which is an isolated vertex.
\end{lem}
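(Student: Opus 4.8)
The plan is to determine the structure of $G-S$ by successive local modifications that keep us inside $\mathcal{V}^k_n$ while strictly increasing the Zagreb indices, contradicting the maximality of $G$. First I would set up notation: let $H_1,\dots,H_t$ ($t\ge 2$) be the components of $G-S$, and record two structural facts obtained exactly as in the proofs of Fact~1 and Fact~2 of Lemma \ref{lem2.3}. Since $S\subseteq X$, every vertex of $S$ has all its neighbours in $Y$; consequently an $X$-vertex of a component $H_i$ has all its neighbours inside $H_i\cap Y$, so $H_i\cap Y\neq\emptyset$, while a component containing no $X$-vertex must be a single vertex of $Y$. Next I would argue that for each $i$ the graph $G[V(H_i)\cup S]$ is complete bipartite with parts $(H_i\cap X)\cup S$ and $H_i\cap Y$: a missing edge inside $V(H_i)\cup S$ joins $X$ to $Y$ and can be added without linking distinct components, so $S$ stays a cut of order $k$ and Lemma \ref{lem2.1} would contradict maximality. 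In particular any $X$-vertex of a \emph{large} component $H_i$ (one containing an $X$-vertex) has degree exactly $|H_i\cap Y|$, and since $\kappa(G)=k$ forces $\delta(G)\ge k$, this yields $|H_i\cap Y|\ge k$. Finally, $|X|>k=|S|$ guarantees at least one large component.

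To show $t=2$, I would suppose $t\ge 3$ and derive a contradiction by merging components. Choose a large component $H_1$ with a vertex $u\in H_1\cap X$, and any other component $H_2$ with a vertex $w\in H_2\cap Y$. Then $uw\notin E(G)$, the graph $G+uw$ is bipartite, and because at least a third component $H_3$ remains separated after $H_1$ and $H_2$ merge, $S$ is still a vertex cut of order $k$; hence $\kappa(G+uw)=k$ and $G+uw\in\mathcal{V}^k_n$. Lemma \ref{lem2.1} then gives $M_1(G+uw)>M_1(G)$ (and likewise for $M_2$), a contradiction. So $G-S$ has exactly two components $H_1,H_2$.

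It then remains to prove that one of $H_1,H_2$ is a single vertex. By the above at least one is large, say $H_1$; I would assume for contradiction that $H_2$ is large as well. Pick $y_i\in H_i\cap Y$ and, without loss of generality, $|H_1\cap X|\ge|H_2\cap X|$, so $d_G(y_1)\ge d_G(y_2)$ and $N_G(y_2)\setminus N_G(y_1)=H_2\cap X\neq\emptyset$. Applying Lemma \ref{lem2.2} with $u=y_1$, $v=y_2$ to move the edges from $y_2$ onto $y_1$ produces a bipartite $G^*$ with $M_1(G^*)>M_1(G)$ and $M_2(G^*)>M_2(G)$, in which $y_2$ is now adjacent only to $S$, so that $G^*-S$ consists of a single component $H'=(H_1\cup H_2)-y_2$ together with the isolated vertex $y_2$. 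I would then complete $G^*$ to $G'$ by adding all absent edges making $G'[V(H')\cup S]$ complete bipartite; this keeps $G'$ bipartite and keeps $S$ a cut, so $G'-S$ is a complete bipartite graph together with the single vertex $y_2$, and Lemma \ref{lem2.1} gives $M_1(G')\ge M_1(G^*)>M_1(G)$, similarly for $M_2$.

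The hard part will be verifying that this completed graph satisfies $G'\in\mathcal{V}^k_n$, i.e. $\kappa(G')=k$, which is where the degree bound from the first step is essential. The cut $S$ gives $\kappa(G')\le k$ immediately. For the reverse inequality, writing $P=H'\cap Y$ one has $|P|=|H_1\cap Y|+|H_2\cap Y|-1\ge 2k-1\ge k$, and $|S|=k$ as well; hence deleting any set of fewer than $k$ vertices leaves both $S$ and $P$ partially intact, and since every surviving vertex is adjacent to the surviving part of $S$ or to the surviving part of $P$ (which are themselves joined), $G'$ minus such a set stays connected. This forces $\kappa(G')=k$, contradicting the maximality of $G$. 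Therefore $H_2$ must be a single vertex, and $G-S$ consists of exactly two components, one of which is an isolated vertex.
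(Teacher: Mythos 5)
Your proof is correct and follows essentially the same strategy as the paper: when $G-S$ has three or more components you add an edge between two of them and invoke Lemma~\ref{lem2.1}, and when both of the two components are nontrivial you shift one $Y$-vertex's edges onto another via Lemma~\ref{lem2.2} and then complete the resulting graph, contradicting maximality. The only difference is one of care, not of method: you first establish the complete-bipartite structure facts (as in Lemma~\ref{lem2.3}) and use them to verify explicitly that the completed graph has connectivity exactly $k$ --- a point the paper dismisses as ``routine to check'' --- which is a welcome tightening rather than a new approach.
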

\begin{proof}
Firstly, we show that $G-S$ consists of two components. Suppose that $G-S$ contains three components, say $G_1, G_2, G_3$. Since $|X|>k$, then $G-S$ contains a component with at least two vertices. Without loss of generality, assume that $|V(G_1)|\geq2$. Hence $V(G_1)\bigcap X\neq\emptyset$ and $V(G_1)\bigcap Y\neq\emptyset$. Since $G$ is connected, then $V(G_2)\bigcap Y\neq\emptyset$. Let $x\in V(G_1)\bigcap X$ and $y\in V(G_2)\bigcap Y$. It is easy to see that $G+xy\in\mathcal{V}^k_n$. By Lemma \ref{lem2.1}, we have $M_1(G+xy)>M_1(G)$ and $M_2(G+xy)>M_2(G)$, a contradiction. Now we can assume that $G-S$ contains two components, say $G_1$ and $G_2$, such that $|V(G_1)|\geq |V(G_2)|$. If $G_2$ is not an isolated vertex, then we have $V(G_2)\bigcap X\neq\emptyset$. Let $y_1\in V(G_1)\bigcap Y$ and $y_2\in V(G_2)\bigcap Y$. If $d_G(y_2)\geq d_G(y_1)$, set $$\widetilde{G}=G-\Big\{y_1x: x\in V(G_1)\bigcap X\Big\}+\Big\{y_2x: x\in V(G_1)\bigcap X\Big\}+\Big\{uv: u\in X, y\in Y\backslash\{y_1\}\Big\}.$$
It is routine to check that $\widetilde{G}\in \mathcal{V}^k_n$. By Lemmas \ref{lem2.1} and \ref{lem2.2}, $M_1(\widetilde{G})>M_1(G)$ and $M_2(\widetilde{G})>M_2(G)$, a contradiction.

This complete the proof.
\end{proof}
\begin{Obs}\label{o}
Choose $G=(X,Y)\in \mathcal{V}^k_n$ such that its $M_1(G)$, $M_2(G)$ are as large as possible. Since $G$ is $k$-connected, then $|X|\geq k$ and $|Y|\geq k$. If $S$ be a vertex cut set of order $k$ in $G$, by Lemma \ref{lem2.3}, without loss of generality, we have $S\subseteq X$. If $|X|=k$, i.e., $S=X$, then $G\cong K_{k,n-k}$. If $|X|>k$, by Lemma \ref{lem2.4}, we have $G\cong O_k\vee_1(K_1\bigcup K_{n-r-1,r-k})$ for some $r>k$. In both cases, we have $G\cong O_k\vee_1(K_1\bigcup K_{n-r-1,r-k})$ for some $r\geq k$. We partition $V(G)$ into $\{v\}\bigcup C\bigcup A\bigcup B\bigcup \{a_{n-r-1}\}$, where $C=\{c_1, c_2, \ldots,c_k\}$, $A=\{a_1, a_2, \ldots,a_{n-r-2}\}$ and $B=\{b_1, b_2, \ldots,b_{r-k}\}$, see Fig. 1.
\end{Obs}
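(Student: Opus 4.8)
The plan is to fix an extremal graph $G=(X,Y)\in\mathcal{V}^k_n$, meaning one for which $M_1(G)$ and $M_2(G)$ are simultaneously as large as possible, and to force its shape by repeatedly applying the edge-adding Lemma \ref{lem2.1} and the degree-shifting Lemma \ref{lem2.2}. First I would record the two cardinality constraints $|X|\geq k$ and $|Y|\geq k$: in a $k$-connected graph every vertex has degree at least $k$, and since each vertex of $Y$ has all of its neighbours in $X$ we get $k\leq d(y)\leq|X|$, and symmetrically $|Y|\geq k$. Next, as $\kappa(G)=k$ there is a vertex cut set $S$ with $|S|=k$; by Lemma \ref{lem2.3} the extremal $G$ does not lie in $\mathcal{A}^k_n$, so no order-$k$ cut set can meet both $X$ and $Y$. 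In particular $S\subseteq X$ or $S\subseteq Y$, and after relabelling the two classes we may assume $S\subseteq X$.

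The argument then splits on $|X|$. If $|X|=k$, then $S\subseteq X$ and $|S|=k$ force $S=X$, so deleting $X$ isolates all of $Y$; maximality under Lemma \ref{lem2.1} then forces every $X$-$Y$ pair to be adjacent, giving $G\cong K_{k,n-k}$, which is $O_k\vee_1(K_1\bigcup K_{n-r-1,r-k})$ with $r=k$. If $|X|>k$, Lemma \ref{lem2.4} says $G-S$ has exactly two components, one of them a single vertex; writing $H$ for the larger component and saturating every admissible edge via Lemma \ref{lem2.1} (comparing degrees through Lemma \ref{lem2.2} where needed) makes $G[V(H)\bigcup S]$ a complete bipartite graph and leaves the isolated vertex adjacent to exactly $S$. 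This is precisely $O_k\vee_1(K_1\bigcup K_{n-r-1,r-k})$ for the appropriate $r>k$, and combining the two cases yields the stated form for some $r\geq k$.

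It then remains to read off the explicit vertex partition of Fig.~1. I would set $C=S=\{c_1,\dots,c_k\}$ (the copy of $O_k$), let $v$ be the isolated vertex of $G-S$, take $B=\{b_1,\dots,b_{r-k}\}\subseteq X$ to be the part of $K_{n-r-1,r-k}$ lying on the same side as $S$, and let $A\bigcup\{a_{n-r-1}\}\subseteq Y$ with $A=\{a_1,\dots,a_{n-r-2}\}$ be the opposite part. A count gives $1+k+(n-r-2)+(r-k)+1=n$, so $V(G)=\{v\}\bigcup C\bigcup A\bigcup B\bigcup\{a_{n-r-1}\}$ is genuinely a partition of $V(G)$, completing the description.

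I expect the real work to be the saturation step rather than any single inequality: one must argue that maximality of $M_1$ and $M_2$ forces the presence of every edge allowed by the bipartition and by the requirement $\kappa(G)=k$, so that the abstract component structure handed down by Lemmas \ref{lem2.3} and \ref{lem2.4} collapses to the single explicit graph rather than to a proper spanning subgraph of it. In particular, each time an edge is added through Lemma \ref{lem2.1} one has to check that the connectivity stays equal to $k$, so that the modified graph remains in $\mathcal{V}^k_n$, and before each application of Lemma \ref{lem2.2} one must verify the degree inequality $d(u)\geq d(v)$; these consistency checks, not a hard estimate, are where the care lies.
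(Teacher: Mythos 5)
Your proposal is correct and follows essentially the same route as the paper: Lemma \ref{lem2.3} to place the cut set $S$ entirely in one class, the case split $|X|=k$ versus $|X|>k$ with Lemma \ref{lem2.4} handling the latter, and saturation via Lemma \ref{lem2.1} to force the complete-bipartite structure. Your writeup is in fact slightly more careful than the paper's terse observation, since you make explicit the saturation step (checking that each added edge keeps $S$ a cut set of order $k$, so the graph stays in $\mathcal{V}^k_n$), which the paper leaves implicit.
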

\begin{figure}[h!]
\begin{center}
\psfrag{a}{$v$}\psfrag{b}{$c_1$}\psfrag{c}{$c_2$}\psfrag{d}{$c_k$}
\psfrag{e}{$b_1$}\psfrag{f}{$b_2$}\psfrag{g}{$b_{r-k}$}
\psfrag{h}{$a_1$}\psfrag{i}{$a_2$}\psfrag{j}{$a_{n-r-1}$}
\includegraphics[width=60mm]{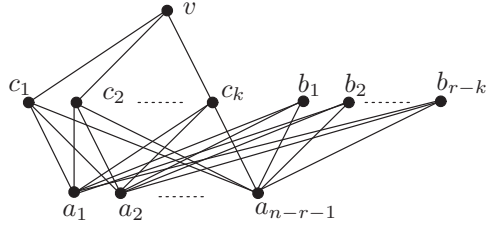} \\
  \caption{$G\cong O_k\vee_1(K_1\bigcup K_{n-r-1,r-k})$, $r\geq k$.} 
\end{center}
\end{figure}

\begin{lem}\label{2.5}
If $G\cong O_k\vee_1(K_1\bigcup K_{n-r-1,r-k})\in \mathcal{V}^k_n$ ($n\geq6$ and $k\leq\frac{n}{2}-1$) such that $r\leq \frac{n}{2}-2$ or $r>\frac{n}{2}$, then $G$ cannot be the graph with the maximum $M_1(G)$ and $M_2(G)$.
\end{lem}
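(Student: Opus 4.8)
The plan is to use Observation~\ref{o}, which forces any candidate for the maximum to be of the rigid form $O_k\vee_1(K_1\bigcup K_{n-r-1,r-k})$; this turns $M_1$ and $M_2$ into explicit functions of the single integer $r$, and it then suffices to show that the graph with parameter $r+1$ beats the one with parameter $r$ whenever $r\leq\frac n2-2$, while the graph with parameter $r-1$ beats it whenever $r>\frac n2$.

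First I would read off the degree sequence from Fig.~1: $v$ has degree $k$, each of the $k$ vertices of $O_k$ has degree $n-r$, each of the $n-r-1$ vertices $a_1,\dots,a_{n-r-1}$ has degree $r$, and each of the $r-k$ vertices of $B$ has degree $n-r-1$. Writing everything as a function of $r$ gives
\[
M_1(r)=k^2+k(n-r)^2+(n-r-1)r^2+(r-k)(n-r-1)^2
\]
and
\[
M_2(r)=k^2(n-r)+kr(n-r)(n-r-1)+r(r-k)(n-r-1)^2 .
\]
For $M_1$ the argument is short, because the forward difference simplifies to $M_1(r+1)-M_1(r)=(n-1)(n-2-2r)-2k$, a strictly decreasing linear function of $r$. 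As $k\leq\frac n2-1<n-1$, this quantity is positive for every $r\leq\frac n2-2$ and, after replacing $r$ by $r-1$, negative for every $r>\frac n2$, which disposes of the $M_1$ claim.

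The substance of the proof is the same statement for $M_2$. Here $\Delta M_2(r):=M_2(r+1)-M_2(r)$, regarded as a polynomial in the real variable $r$, is a genuine cubic with positive leading coefficient, so its sign is not visible at a glance and it even turns positive again once $r$ exceeds $n-2$. The device that controls it is its second derivative $12\bigl(2r-(n-2)\bigr)$: thus $\Delta M_2$ is concave on $(-\infty,\frac n2-1]$ and convex on $[\frac n2-1,\infty)$. On the first interval a concave function attains its minimum at the endpoints, so $\Delta M_2(r)>0$ for all $k\leq r\leq\frac n2-2$ will follow from checking only $r=k$ and $r=\frac n2-2$; on the second interval a convex function attains its maximum at the endpoints, so $\Delta M_2(r)<0$ for all $\lfloor n/2\rfloor\leq r\leq n-3$ will follow from checking only $r=\lfloor n/2\rfloor$ and $r=n-3$. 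Each of these evaluations collapses to an elementary inequality in $n$ and $k$; for example one gets the clean values $\Delta M_2(\tfrac n2-2)=(n-3)(n+1)-(k-1)^2>0$, $\Delta M_2(\tfrac n2-1)=-k^2<0$ and $\Delta M_2(n-3)=-3n^2+(20-k)n-k^2+4k-32<0$, all valid once $n\geq6$ and $1\leq k\leq\frac n2-1$. The one endpoint needing a touch more care is $r=k$, where $\Delta M_2(k)$ is itself a cubic in $k$; I would handle it by noting that on $1\leq k\leq\frac n2-2$ this cubic is unimodal and positive at both ends.

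Assembling these facts finishes the proof: for $r\leq\frac n2-2$ we have $M_1(r+1)>M_1(r)$ and $M_2(r+1)>M_2(r)$, while for $r>\frac n2$ we have $M_1(r-1)>M_1(r)$ and $M_2(r-1)>M_2(r)$, and in both situations the comparison graph again belongs to $\mathcal V^k_n$ and has the admissible shape, so $G$ cannot be extremal. The main obstacle is precisely the cubic $\Delta M_2$: proving it has the right sign across the whole admissible window for $r$ is where things could break, and the concavity/convexity split at $r=\frac{n-2}{2}$ is exactly what reduces that to the handful of explicit endpoint inequalities above, the constraint $k\leq\frac n2-1$ being what guarantees the positive ones (notably $\Delta M_2(\frac n2-2)$) stay positive.
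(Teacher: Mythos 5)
Your proof is correct --- I verified all four of your closed-form evaluations, $\Delta M_1(r)=(n-1)(n-2-2r)-2k$, $\Delta M_2(\tfrac n2-2)=(n-3)(n+1)-(k-1)^2$, $\Delta M_2(\tfrac n2-1)=-k^2$ and $\Delta M_2(n-3)=-3n^2+(20-k)n-k^2+4k-32$, and they are exact --- but it is not the paper's argument, and the difference is worth recording. Writing $H(r)$ for $O_k\vee_1(K_1\bigcup K_{n-r-1,r-k})$: for $r\le\frac n2-2$ the paper makes exactly your comparison (its graph $G^*$, obtained by moving $a_{n-r-1}$ across the bipartition, is precisely $H(r+1)$), but it verifies positivity of the two differences by a direct chain of estimates using $n-2r\ge4$, whereas you control the cubic $\Delta M_2$ via its inflection at $r=\frac{n-2}{2}$ and endpoint checks. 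For $r>\frac n2$ the routes genuinely diverge: the paper never compares with $H(r-1)$; it rewires $v$ from $C$ to $\{a_1,\dots,a_k\}$, i.e.\ compares $H(r)$ with its reflection $H(n-r-1)$, and that choice makes everything telescope to the closed forms $2k(2r-n+1)$ for $M_1$ and $(2r-n+1)k^2$ for $M_2$, with no sign analysis at all. Your route buys uniformity (one difference function, two windows), and your values $\Delta M_1(\tfrac n2-1)=-2k$, $\Delta M_2(\tfrac n2-1)=-k^2$ incidentally expose a sign slip in the paper's proof of Theorem \ref{2.6}, Case 2, which asserts these same comparisons with value $+2k$ and $+k^2$ yet draws the conclusion matching your signs; the paper's reflection buys brevity in the hard case. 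Two places where you must still supply detail, both at the same ``routine to check'' level the paper glosses: (i) membership of the comparison graphs in $\mathcal V^k_n$ --- in your Case 2, $H(r-1)\in\mathcal V^k_n$ needs $n-r\ge k$, which holds only because the hypothesis $H(r)\in\mathcal V^k_n$ already forces $n-r-1\ge k$ (otherwise the partition class of size $n-r-1$ would be a cut set of fewer than $k$ vertices); (ii) the endpoint $r=k$, where your unimodality claim is true but needs the supporting computation that the local minimum of $4k^3-(6n-9)k^2+(2n^2-9n+10)k+(n-2)^2$ lies at $k\ge\frac{3(n-1)}{4}>\frac n2-2$, together with the endpoint values $3n^2-19n+27>0$ at $k=1$ and $\frac{3n^2}{4}+n-12>0$ at $k=\frac n2-2$. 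Finally, a cosmetic inconsistency: you announce a check at $r=\lfloor n/2\rfloor$ but evaluate at $r=\frac n2-1$; since $\frac n2-1$ is the left edge of the convexity window and every integer $\rho\ge\lfloor n/2\rfloor$ lies in $[\frac n2-1,\,n-3]$, the argument is unaffected, but the text should say so.
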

\begin{proof}

We consider the following two cases.
\begin{Case}
$r\leq \frac{n}{2}-2$.
\end{Case}
In this case, let
$$G^*=G-\{a_{n-r-1}c_1, \ldots,a_{n-r-1}c_k, a_{n-r-1}b_1, \ldots, a_{n-r-1}b_{r-k}\}+\{a_{n-r-1}a_1, \ldots, a_{n-r-1}a_{n-r-2}\}.$$
It is rountine to check that $G^*\in \mathcal{V}^k_n$; see Fig. 2. And we have
\begin{figure}[h!]
\begin{center}
\psfrag{a}{$v$}\psfrag{b}{$c_1$}\psfrag{c}{$c_2$}\psfrag{d}{$c_k$}
\psfrag{e}{$b_1$}\psfrag{f}{$b_2$}\psfrag{g}{$b_{r-k}$}\psfrag{o}{$a_{n-r-1}$}
\psfrag{h}{$a_1$}\psfrag{i}{$a_2$}\psfrag{j}{$a_{n-r-2}$}
\psfrag{1}{$v$}\psfrag{2}{$c_1$}\psfrag{3}{$c_2$}\psfrag{4}{$c_k$}
\psfrag{5}{$b_1$}\psfrag{6}{$b_2$}\psfrag{7}{$b_{r-k}$}\psfrag{8}{$a_1$}
\psfrag{9}{$a_2$}\psfrag{k}{$a_k$}\psfrag{l}{$a_{n-r-1}$}
\psfrag{m}{$G^*$}\psfrag{n}{$G^{**}$}
\includegraphics[width=120mm]{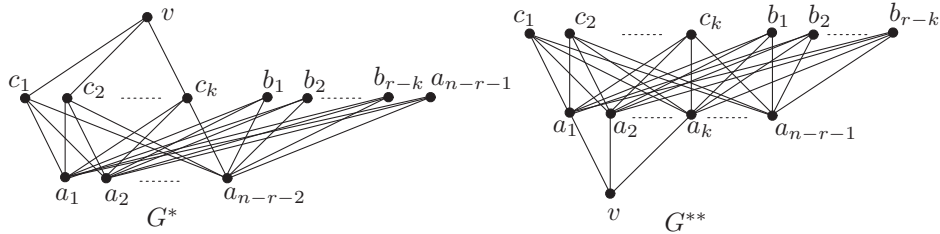} \\
  \caption{Graphs $G^*$ and $G^{**}$ used in Case 1 of Lemma 2.5.} 
\end{center}
\end{figure}
\begin{align*}
&M_1(G^*)-M_1(G)\\
=&\sum_{i=1}^{k}\Big[d^2_{G^*}(c_i)-d^2_{G}(c_i)\Big]+\sum_{i=1}^{r-k}\Big[d^2_{G^*}(b_i)-d^2_{G}(b_i)\Big]+\sum_{i=1}^{n-r-1}\Big[d^2_{G^*}(a_i)-d^2_{G}(a_i)\Big]\\
=&k[1-2(n-r)]+(r-k)[1-2(n-r-1)]+(n-r-2)(1+2r)+(n-r-2)^2-r^2\\
=&k[1-2(n-r)]+(r-k)[1-2(n-r)]+2(r-k)+(n-r-2)(1+2r)+(n-r-2)^2-r^2\\
=&r[1-2(n-r)]+2(r-k)+(n-r-2)(1+2r)+(n-r-2)^2-r^2\\
=&-3r+(n-r-2)+2(r-k)+(n-r-2)^2-r^2\\
=&[(n-r)-r-2]-2k+(n-r-2)^2-r^2\\
\geq&2-2k+(r+2)^2-r^2\\
=&-2k+4r+6\\
>&0,
\end{align*}
\begin{align*}
&M_2(G^*)-M_2(G)\\
=&\sum_{i=1}^{k}\Big[d_{G^*}(v)d_{G^*}(c_i)-d_{G}(v)d_{G}(c_i)\Big]+\sum_{i=1}^{k}\sum_{j=1}^{n-r-2}\Big[d_{G^*}(c_i)d_{G^*}(a_j)-d_{G}(c_i)d_{G}(a_j)\Big]\\
&+\sum_{i=1}^{r-k}\sum_{j=1}^{n-r-2}\Big[d_{G^*}(b_i)d_{G^*}(a_j)-d_{G}(b_i)d_{G}(a_j)\Big]+d_{G^*}(a_{n-r-1})\sum_{i=1}^{n-r-2}d_{G^*}(a_i)\\
&-d_{G}(a_{n-r-1})\Big[\sum_{i=1}^{k}d_{G}(c_i)+\sum_{i=1}^{r-k}d_G(b_i)\Big]\\
=&-k^2+(n-2r-1)(n-r-2)k+(n-2r-2)(r-k)(n-r-2)+(n-r-2)^2(r+1)\\
&-r^2(n-r)+r^2-rk\\
=&-k^2+(n-2r-1)(n-r-2)k+(n-2r-2)(r-k)(n-r-2)+\Big[(n-r)^2+4-4(n-r)\Big](r+1)\\
&-r^2(n-r)+r^2-rk\\
=&-k^2+(n-2r-1)(n-r-2)k+(n-2r-2)(r-k)(n-r-2)+\Big[(n-r)^2+4-4(n-r)\Big]\\
&+\Big[r(n-r)^2+4r-4(n-r)r\Big]-r^2(n-r)+r^2-rk\\
=&-k^2+(n-2r-1)(n-r-2)k+(n-2r-2)(r-k)(n-r-2)\\
&+\Big[(n-r)^2+4-4(n-r)+4r-4(n-r)r+r^2-rk\Big]+r(n-r)(n-r-r)\\
>&-k^2+\Big[(n-r)^2+4-4(n-r)+4r-4(n-r)r+r^2-rk\Big]+4r(n-r)\ \ \ \ \ \ \ \ \ \ \ \ \ \ \ \ \ \ \ \ \ \ \text{(by $n-2r\geq4$)}\\
=&-k^2+\Big[(n-r)^2+4-4(n-r)+4r+r^2-rk\Big]\\
=&-k^2+(n-r-2)^2+4r+r^2-rk\\
>&0.
\end{align*}

\begin{Case}
$r>\frac{n}{2}$.
\end{Case}
In this case, let
$$G^{**}=G-\{vc_1, vc_2, \ldots, vc_k\}+\{va_1, va_2, \ldots, va_k\}.$$
It is routine to check that $G^{**}\in \mathcal{V}^k_n$; see Fig. 2. And we have
\begin{align*}
&M_1(G^{**})-M_1(G)\\
=&\sum_{i=1}^{k}\Big[d^2_{G^{**}}(c_i)-d^2_{G}(c_i)\Big]+\sum_{i=1}^{r-k}\Big[d^2_{G^{**}}(b_i)-d^2_{G}(b_i)\Big]+\sum_{i=1}^{k}\Big[d^2_{G^{**}}(a_i)-d^2_{G}(a_i)\Big]+\sum_{i=k+1}^{n-r-1}\Big[d^2_{G^{**}}(a_i)-d^2_{G}(a_i)\Big]\\
=&k\Big[1-2(n-r)\Big]+k(2r+1)\\
=&k(2+4r-2n)\\
>&0,
\end{align*}
\begin{align*}
&M_2(G^{**})-M_2(G)\\
=&\sum_{i=1}^{k}\Big[d_{G^{**}}(v)d_{G^{**}}(a_i)-d_{G}(v)d_{G}(c_i)\Big]+\sum_{i=1}^{k}\sum_{j=1}^{k}\Big[d_{G^{**}}(c_i)d_{G^{**}}(a_j)-d_{G}(c_i)d_{G}(a_j)\Big]\\
&+\sum_{i=1}^{k}\sum_{j=k+1}^{n-r-1-k}\Big[d_{G^{**}}(c_i)d_{G^{**}}(a_j)-d_{G}(c_i)d_{G}(a_j)\Big]+\sum_{i=1}^{r-k}\sum_{j=1}^{k}\Big[d_{G^{**}}(b_i)d_{G^{**}}(a_j)-d_{G}(b_i)d_{G}(a_j)\Big]\\
=&k^2\Big[r+1-(n-r)\Big]+k^2\Big[(r+1)(n-r-1)-r(n-r)\Big]-rk(n-r-1-k)+(n-r-1)(r-k)k\\
=&k^2\Big[r+1-(n-r)\Big]+k^2\Big[(n-r)-r-1\Big]-rk(n-r-1)+rk^2+rk(n-r)-rk-(n-r)k^2+k^2\\
=&rk^2-(n-r)k^2+k^2\\
=&(2r-n+1)k^2\\
>&0.
\end{align*}

This completes the proof.
\end{proof}

\begin{thm}\label{2.6}
Let $G$ be the graph in $\mathcal{V}^k_n$ ($n\geq6$) with the maximum $M_1(G)$ (resp. $M_2(G)$). Then we have the following results
\begin{wst}
\item[{\rm (i)}]$G\cong O_k\vee_1(K_1\bigcup K_{\frac{n-1}{2},\frac{n-1-2k}{2}})$ if $n$ is odd,
\item[{\rm (ii)}]$G\cong K_{\frac{n}{2}, \frac{n}{2}}$ if $n$ is even and $k=\frac{n}{2}$; $G\cong O_k\vee_1(K_1\bigcup K_{\frac{n}{2},\frac{n-2-2k}{2}})$ if $n$ is even and $k\leq\frac{n}{2}-1$.
\end{wst}
\end{thm}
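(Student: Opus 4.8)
The plan is to turn the problem into the optimisation of a single discrete parameter. By Observation \ref{o}, any maximiser $G$ of $M_1$ (resp. $M_2$) in $\mathcal{V}^k_n$ satisfies $G\cong O_k\vee_1(K_1\bigcup K_{n-r-1,r-k})$ for some integer $r\geq k$, so all that remains is to pin down the best $r$. Reading the degrees off Fig. 1 — $d(v)=k$, $d(c_i)=n-r$, $d(a_j)=r$ for the $n-r-1$ vertices of the large part, and $d(b_i)=n-r-1$ — I would first record the two closed forms
\begin{align*}
M_1(r)&=k^2+k(n-r)^2+(n-r-1)r^2+(r-k)(n-r-1)^2,\\
M_2(r)&=k^2(n-r)+k(n-r-1)(n-r)r+(r-k)r(n-r-1)^2,
\end{align*}
valid for every admissible $r$. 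Everything downstream is an analysis of these two polynomials.

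Next I would dispose of the degenerate situation in which $k$ takes its largest possible value, namely $k=\frac{n}{2}$ for even $n$ or $k=\frac{n-1}{2}$ for odd $n$; here the hypothesis $k\le\frac{n}{2}-1$ of Lemma \ref{2.5} fails and a direct argument is needed. Because the two colour classes of $O_k\vee_1(K_1\bigcup K_{n-r-1,r-k})$ have sizes $r$ and $n-r$, both of which must be at least $k$, we get $k\le r\le n-k$; combined with the bound $k=\kappa(G)\le\delta(G)$ on the minimum degree (which excludes the value $r=\frac{n+1}{2}$ in the odd case, whose smallest degree is $\frac{n-3}{2}<k$), this forces $r=k$ and hence $G\cong K_{k,n-k}$. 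This gives $K_{\frac{n}{2},\frac{n}{2}}$ in the even case and $K_{\frac{n-1}{2},\frac{n+1}{2}}$ (the $k=\frac{n-1}{2}$ instance of (i)) in the odd case, with Lemma \ref{lem2.1} confirming that completeness is optimal.

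For the main range $k\le\frac{n}{2}-1$, I would invoke Lemma \ref{2.5}, which rules out every $r$ with $r\le\frac{n}{2}-2$ or $r>\frac{n}{2}$. Intersecting the surviving band $\frac{n}{2}-2<r\le\frac{n}{2}$ with the integers $r\ge k$ leaves exactly two candidates: $r\in\{\frac{n-3}{2},\frac{n-1}{2}\}$ when $n$ is odd, and $r\in\{\frac{n}{2}-1,\frac{n}{2}\}$ when $n$ is even. The theorem then follows by substituting the candidate values into the two closed forms above and checking that $M_1$ and $M_2$ are both strictly larger at $r=\frac{n-1}{2}$ than at $r=\frac{n-3}{2}$ (odd $n$), and both strictly larger at $r=\frac{n}{2}-1$ than at $r=\frac{n}{2}$ (even $n$); each difference $M_i(r_{\mathrm{win}})-M_i(r_{\mathrm{lose}})$ should collapse to a manifestly positive quantity under $n\ge6$ and $k\le\frac{n}{2}-1$.

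The main obstacle is bookkeeping rather than conceptual. The winning $r$ changes with parity — the larger candidate wins for odd $n$, the smaller for even $n$ — because the continuous peak of $M_i(r)$ sits near $r=\frac{n-1}{2}$, an integer for odd $n$ but a half-integer for even $n$; so the two-point comparison must be run separately for each parity. The genuinely delicate requirement is that $M_1$ and $M_2$ be maximised by the \emph{same} $r$: a priori the two indices could disagree, so both difference computations must be shown to carry the same sign at once. I expect the $M_2$ difference (cubic in $n$ and $r$) to be the most error-prone piece, and I would keep the substitutions $r=\frac{n-1}{2}$ and $r=\frac{n}{2}-1$ in exact rational form throughout to control the algebra.
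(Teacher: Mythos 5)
Your proposal is correct and is essentially the paper's own argument: Observation \ref{o} and Lemma \ref{2.5} reduce the problem to the two candidates $r\in\{\frac{n-3}{2},\frac{n-1}{2}\}$ for odd $n$ and $r\in\{\frac{n}{2}-1,\frac{n}{2}\}$ for even $n$, after which the paper carries out exactly the two-point comparison of $M_1$ and $M_2$ that you describe (and it settles the boundary cases $k=\frac{n-1}{2}$, $k=\frac{n}{2}$ by noting that $\mathcal{V}^k_n$ is then a single complete bipartite graph, which your side-size/minimum-degree argument reproves). Your predicted winners --- the larger candidate for odd $n$, the smaller for even $n$ --- are the correct ones and match statement (i)/(ii); this is worth noting because in the paper's Case 2 the displayed differences $M_1(G)-M_1(G')=2k$ and $M_2(G)-M_2(G')=k^2$ are computed with the roles of $G$ and $G'$ effectively interchanged relative to their stated definitions, yet the conclusion (ii), as your computation confirms, is right.
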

\begin{proof}
Since $G$ is bipartite, by the definition of $k$-connected, we have $k\leq\frac{n}{2}$. If $k=\frac{n-1}{2}$ (resp. $k=\frac{n}{2}$), it is easy to see that $\mathcal{V}^k_n=\{K_{\frac{n-1}{2}, \frac{n+1}{2}}\}$ (resp. $\mathcal{V}^k_n=\{K_{\frac{n}{2}, \frac{n}{2}}\}$) and the result is clear. In what follows, assume that $k\leq\frac{n}{2}-1$. By Observation \ref{o} and Lemma \ref{2.5}, we have $G\in \Big\{O_k\vee_1(K_1\bigcup K_{n-r-1,r-k}): \frac{n-3}{2}\leq r\leq\frac{n}{2}\Big\}\subseteq \mathcal{V}^k_n$. We consider the following two cases.
\setcounter{Case}{0}
\begin{Case}
$n$ is odd.
\end{Case}
In this case, we have
$$
\Big\{O_k\vee_1(K_1\bigcup K_{n-r-1,r-k})| \frac{n-3}{2}\leq r\leq\frac{n}{2}\Big\}=\Big\{O_k\vee_1(K_1\bigcup K_{\frac{n+1}{2},\frac{n-3-2k}{2}}), O_k\vee_1(K_1\bigcup K_{\frac{n-1}{2},\frac{n-1-2k}{2}})\Big\}.
$$
Let $G\cong O_k\vee_1(K_1\bigcup K_{\frac{n-1}{2},\frac{n-1-2k}{2}})$ and $G'\cong O_k\vee_1(K_1\bigcup K_{\frac{n+1}{2},\frac{n-3-2k}{2}})$, see Fig. 3. Then we have
\begin{figure}[h!]
\begin{center}
\psfrag{a}{$v$}\psfrag{b}{$c_1$}\psfrag{c}{$c_2$}\psfrag{d}{$c_k$}
\psfrag{e}{$b_1$}\psfrag{f}{$b_2$}\psfrag{g}{$b_{\frac{n-1-2k}{2}}$}
\psfrag{h}{$a_1$}\psfrag{i}{$a_2$}\psfrag{j}{$a_{\frac{n-1}{2}}$}
\psfrag{1}{$v$}\psfrag{2}{$c_1$}\psfrag{3}{$c_2$}\psfrag{4}{$c_k$}
\psfrag{5}{$b_1$}\psfrag{6}{$b_2$}\psfrag{7}{$b_{\frac{n-3-2k}{2}}$}\psfrag{8}{$a_1$}
\psfrag{9}{$a_2$}\psfrag{k}{$a_{\frac{n-1}{2}}$}\psfrag{l}{$b_{\frac{n-1-2k}{2}}$}
\psfrag{m}{$G$}\psfrag{n}{$G'$}
\includegraphics[width=120mm]{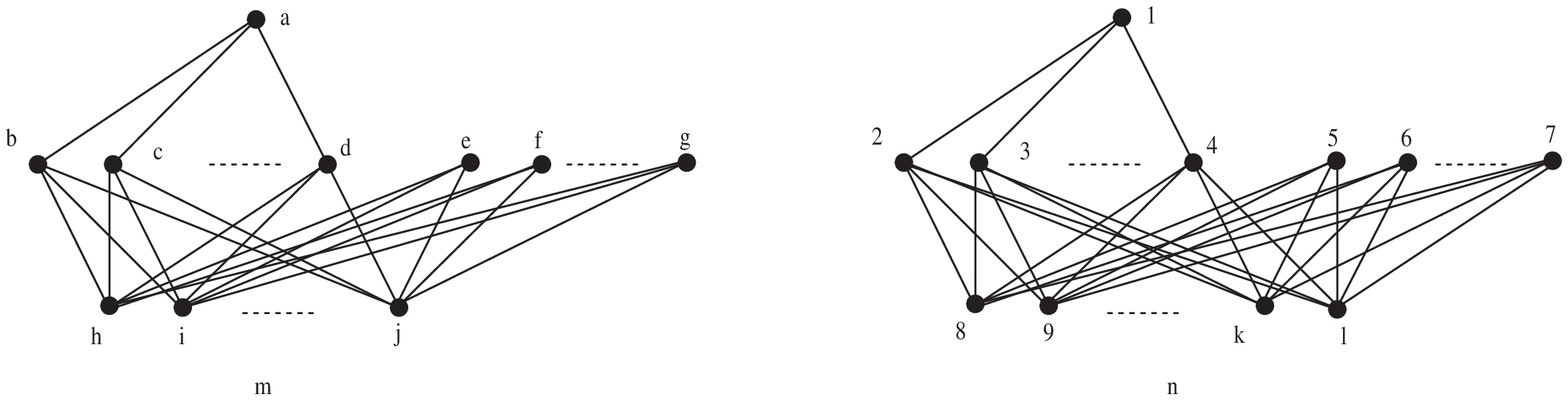} \\
  \caption{Graphs $G$ and $G'$ used in Case 1 of Theorem 2.6.} 
\end{center}
\end{figure}

\begin{align*}
&M_1(G)-M_1(G')\\
=&\sum_{i=1}^{k}\Big[d^2_{G}(c_i)-d^2_{G'}(c_i)\Big]+\sum_{i=1}^{\frac{n-1-2k}{2}}\Big[d^2_{G}(b_i)-d^2_{G'}(b_i)\Big]+\sum_{i=1}^{\frac{n-1}{2}}\Big[d^2_{G}(a_i)-d^2_{G'}(a_i)\Big]\\
=&k\Big[(\frac{n+1}{2})^2-(\frac{n+3}{2})^2\Big]+\frac{n-3-2k}{2}\Big[(\frac{n-1}{2})^2-(\frac{n+1}{2})^2\Big]+\frac{n-1}{2}\Big[(\frac{n-1}{2})^2-(\frac{n-3}{2})^2\Big]\\
&+(\frac{n-1}{2})^2-(\frac{n-3}{2})^2\\
=&\frac{k}{4}(-4n-8)-\frac{2n^2-6n-4nk}{4}+\frac{n-1}{4}(2n-4)+\frac{n^2+1-2n}{4}-\frac{n^2+9-6n}{4}\\
=&-2k+n-1\\
>&(-2)\times\frac{n-1}{2}+n-1\ \ \ \ \ \ \ \ \ \ \ \ \ \ \ \ \ \ \ \ \ \ \text{(by $k\leq\frac{n}{2}-1$)}\\
=&0,
\end{align*}
\begin{align*}
&M_2(G)-M_2(G')\\
=&\sum_{i=1}^{k}\Big[d_{G}(v)d_{G}(c_i)-d_{G'}(v)d_{G'}(c_i)\Big]+\Big[\sum_{i=1}^{k}\sum_{j=1}^{\frac{n-1}{2}}d_{G}(c_i)d_{G}(a_j)-\sum_{i=1}^{k}\sum_{j=1}^{\frac{n-1}{2}}d_{G'}(c_i)d_{G'}(a_j)\\
&-d_{G'}(b_{\frac{n-1-2k}{2}})\sum^{k}_{i=1}d_{G'}(c_i)\Big]+\Big[\sum_{i=1}^{\frac{n-1-2k}{2}}\sum_{j=1}^{\frac{n-1}{2}}d_{G}(b_i)d_{G}(a_j)-\sum_{i=1}^{\frac{n-3-2k}{2}}\sum_{j=1}^{\frac{n-1}{2}}d_{G'}(b_i)d_{G'}(a_j)\Big]\\
=&-k^2+\Big[\frac{n+1}{2}\cdot\frac{n-1}{2}\cdot k\cdot\frac{n-1}{2}-\frac{n+3}{2}\cdot\frac{n-3}{2}\cdot k\cdot\frac{n+1}{2}\Big]+\Big[\frac{n-1}{2}\cdot\frac{n-1}{2}\cdot\frac{n-1-2k}{2}\cdot\frac{n-1}{2}\\
&-\frac{n+1}{2}\cdot\frac{n-3}{2}\cdot\frac{n-3-2k}{2}\cdot\frac{n+1}{2}\Big]\\
=&-k^2+\frac{k(n+1)}{8}\cdot(10-2n)+\frac{1}{8}\cdot(-8n+4n^2-4-8nk+2kn^2-2k)\\
=&\frac{1}{2}\Big[-2n+n^2-1+2k-2k^2\Big]\\
\geq&\frac{1}{2}\Big[-2n+n^2-1+2k-2(\frac{n}{2}-1)^2\Big]\\
=&\frac{1}{2}(\frac{n^2}{2}+2k-3)\\
>&0.
\end{align*}
Thus, we have (i) is true.

\begin{Case}
$n$ is even.
\end{Case}
In this case, we have
$$
\Big\{O_k\vee_1(K_1\bigcup K_{n-r-1,r-k})| \frac{n-3}{2}\leq r\leq\frac{n}{2}\Big\}=\Big\{O_k\vee_1(K_1\bigcup K_{\frac{n-2}{2},\frac{n-2k}{2}}), O_k\vee_1(K_1\bigcup K_{\frac{n}{2},\frac{n-2-2k}{2}})\Big\}.
$$
Let $G\cong O_k\vee_1(K_1\bigcup K_{\frac{n-2}{2},\frac{n-2k}{2}})$ and $G'\cong O_k\vee_1(K_1\bigcup K_{\frac{n}{2},\frac{n-2-2k}{2}})$, see Fig. 4. Then we have
\begin{figure}[h!]
\begin{center}
\psfrag{a}{$v$}\psfrag{b}{$c_1$}\psfrag{c}{$c_2$}\psfrag{d}{$c_k$}
\psfrag{e}{$b_1$}\psfrag{f}{$b_2$}\psfrag{g}{$b_{\frac{n-2-2k}{2}}$}
\psfrag{h}{$a_1$}\psfrag{i}{$a_2$}\psfrag{j}{$a_{\frac{n-2}{2}}$}
\psfrag{1}{$v$}\psfrag{2}{$c_1$}\psfrag{3}{$c_2$}\psfrag{4}{$c_k$}
\psfrag{5}{$b_1$}\psfrag{6}{$b_2$}\psfrag{7}{$b_{\frac{n-2-2k}{2}}$}\psfrag{8}{$a_1$}
\psfrag{9}{$a_2$}\psfrag{k}{$a_{\frac{n-2}{2}}$}\psfrag{l}{$a_{\frac{n}{2}}$}\psfrag{o}{$a_{\frac{n}{2}}$}
\psfrag{n}{$G$}\psfrag{m}{$G'$}
\includegraphics[width=120mm]{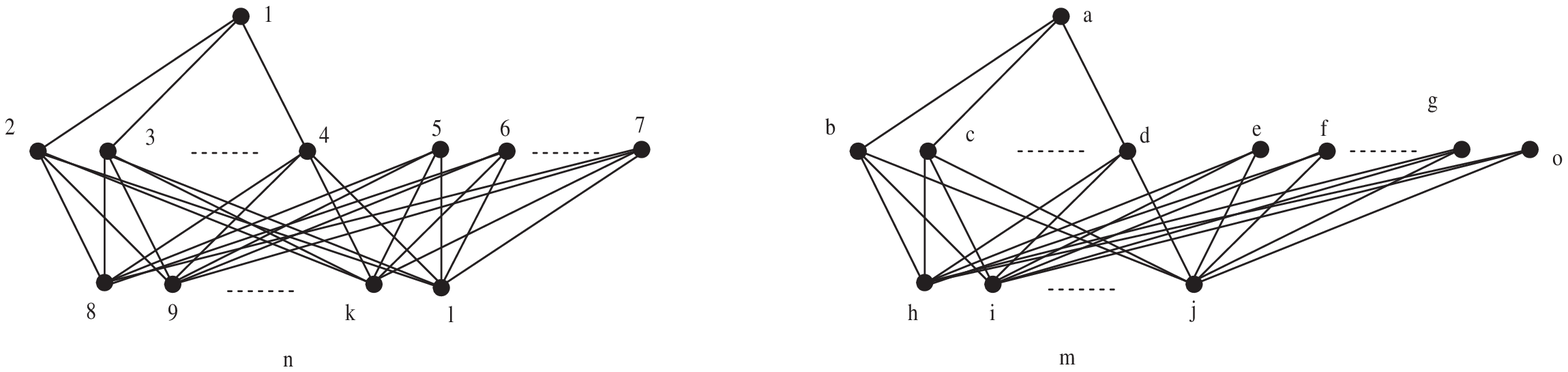} \\
  \caption{Graphs $G$ and $G'$ used in Case 2 of Theorem 2.6.} 
\end{center}
\end{figure}
\begin{align*}
&M_1(G)-M_1(G')\\
=&\sum_{i=1}^{k}\Big[d^2_{G}(c_i)-d^2_{G'}(c_i)\Big]+\sum_{i=1}^{\frac{n-2-2k}{2}}\Big[d^2_{G}(b_i)-d^2_{G'}(b_i)\Big]+\sum_{i=1}^{\frac{n}{2}}\Big[d^2_{G}(a_i)-d^2_{G'}(a_i)\Big]\\
=&k\Big[(\frac{n+2}{2})^2-(\frac{n}{2})^2\Big]+\frac{n-2-2k}{2}\Big[(\frac{n}{2})^2-(\frac{n-2}{2})^2\Big]+\frac{n-2}{2}\Big[(\frac{n-2}{2})^2-(\frac{n}{2})^2\Big]\\
&++(\frac{n-2}{2})^2-(\frac{n-2}{2})^2\\
=&2k\\
>&0,
\end{align*}
\begin{align*}
&M_2(G)-M_2(G')\\
=&\sum_{i=1}^{k}\Big[d_{G}(v)d_{G}(c_i)-d_{G'}(v)d_{G'}(c_i)\Big]+\Big[\sum_{i=1}^{k}\sum_{j=1}^{\frac{n}{2}}d_{G}(c_i)d_{G}(a_j)-\sum_{i=1}^{k}\sum_{j=1}^{\frac{n}{2}}d_{G'}(c_i)d_{G'}(a_j)\Big]\\
&+\Big[\sum_{i=1}^{\frac{n-2-2k}{2}}\sum_{j=1}^{\frac{n}{2}}d_{G}(b_i)d_{G}(a_j)-\sum_{i=1}^{\frac{n-2-2k}{2}}\sum_{j=1}^{\frac{n-2}{2}}d_{G'}(b_i)d_{G'}(a_j)-d_{G'}(a_{\frac{n}{2}})\sum^{\frac{n-2}{2}}_{i=1}d_{G'}(a_i)\Big]\\
=&k^2+\Big[\frac{n+2}{2}\cdot\frac{n-2}{2}\cdot k\cdot\frac{n}{2}-\frac{n}{2}\cdot\frac{n}{2}\cdot k\cdot\frac{n-2}{2}\Big]+\Big[\frac{n-2-2k}{2}\cdot\frac{n}{2}\cdot\frac{n-2}{2}\cdot\frac{n}{2}\\
&-\frac{n-2k}{2}\cdot\frac{n-2}{2}\cdot\frac{n-2}{2}\cdot\frac{n}{2}\Big]\\
=&k^2+\frac{n-2}{2}\cdot\frac{n}{2}\cdot k-k\cdot\frac{n-2}{2}\cdot\frac{n}{2}\\
=&k^2\\
>&0.
\end{align*}
Thus, we have (ii) is true.

This completes the proof.
\end{proof}
\begin{thm}
Let $G$ be the graph in $\mathcal{E}^s_n$ ($n\geq6$) with the maximum $M_1(G)$ (resp. $M_2(G)$). Then we have the following results
\begin{wst}
\item[{\rm (i)}]$G\cong O_s\vee_1(K_1\bigcup K_{\frac{n-1}{2},\frac{n-1-2s}{2}})$ if $n$ is odd,
\item[{\rm (ii)}]$G\cong K_{\frac{n}{2}, \frac{n}{2}}$ if $n$ is even and $s=\frac{n}{2}$; $G\cong O_s\vee_1(K_1\bigcup K_{\frac{n}{2},\frac{n-2-2s}{2}})$ if $n$ is even and $s\leq\frac{n}{2}-1$.
\end{wst}
\end{thm}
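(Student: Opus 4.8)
The plan is to derive this edge-connectivity theorem directly from the vertex-connectivity result Theorem~\ref{2.6}, exploiting the inequalities $\kappa(G)\le\kappa'(G)\le\delta(G)$ recorded in the introduction, rather than re-running the entire sequence of structural lemmas with edge cuts in place of vertex cuts.

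First I would establish the containment $\mathcal{E}^s_n\subseteq\bigcup_{k=1}^{s}\mathcal{V}^k_n$. If $G\in\mathcal{E}^s_n$ then $\kappa'(G)=s\ge1$, so $G$ is connected and $1\le\kappa(G)\le\kappa'(G)=s$; putting $k=\kappa(G)$ places $G$ in $\mathcal{V}^k_n$ for some $k\le s$. Hence for $i\in\{1,2\}$ one gets $\max_{G\in\mathcal{E}^s_n}M_i(G)\le\max_{1\le k\le s}\max_{G\in\mathcal{V}^k_n}M_i(G)$, and by Theorem~\ref{2.6} the inner maximum over $\mathcal{V}^k_n$ is attained by the explicit graph $G_k:=O_k\vee_1(K_1\bigcup K_{n-r-1,r-k})$ named there.

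The second step is to show that $M_1(G_k)$ and $M_2(G_k)$ increase with $k$, so that the maximum over $k\le s$ occurs at $k=s$. For odd $n$ the degrees are $d(v)=k$, $d(c_i)=\tfrac{n+1}{2}$ and $d(a_j)=d(b_i)=\tfrac{n-1}{2}$, and substituting gives $M_1(G_k)=k^2+(2m+1)k+2m^3$ with $m=\tfrac{n-1}{2}$, which is increasing for $k\ge0$; the even case and the two $M_2$ expressions reduce to the same kind of single-variable polynomial comparison. This monotonicity is the only real computation in the argument and is where I expect to spend the effort.

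Finally I would verify that the winning candidate $G_s$ itself lies in $\mathcal{E}^s_n$, which closes the loop. In $G_s$ the vertex $v$ has degree exactly $s$ and no vertex has smaller degree (since $s\le\tfrac{n}{2}-1$ is at most every other degree), so $\delta(G_s)=s$; together with $\kappa(G_s)=s$ this forces $\kappa'(G_s)=s$ via $s=\kappa(G_s)\le\kappa'(G_s)\le\delta(G_s)=s$, so $G_s\in\mathcal{E}^s_n$ (the complete-bipartite boundary values $s=\tfrac{n-1}{2}$ and $s=\tfrac{n}{2}$ are handled directly as in Theorem~\ref{2.6}). The upper bound $M_i(G)\le M_i(G_s)$ is therefore realized inside $\mathcal{E}^s_n$, and the extremal graph is unique: any $G\in\mathcal{E}^s_n$ with $\kappa(G)<s$ is beaten strictly by monotonicity, while those with $\kappa(G)=s$ fall under the uniqueness already supplied by Theorem~\ref{2.6}. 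The main obstacle is just packaging the $M_2$ monotonicity inequality cleanly; the rest is structural bookkeeping.
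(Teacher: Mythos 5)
Your proposal is correct and takes essentially the same route as the paper: reduce to Theorem \ref{2.6} via the chain $\kappa(G)\le\kappa'(G)\le\delta(G)$ (so $G\in\bigcup_{k=1}^{s}\mathcal{V}^k_n$), compare the extremal values across $k\le s$, and observe that the $k=s$ extremal graph has $\kappa=\delta=s$ and hence edge-connectivity exactly $s$. The only deviation is in the cross-$k$ comparison, where the paper invokes Lemma \ref{lem2.1} (adding an edge from $v$ into $B$ turns the connectivity-$k$ extremal graph into a graph of connectivity $k+1$, strictly increasing both indices) while you verify monotonicity by an explicit polynomial computation; this is a harmless substitution within the same argument, and your write-up actually makes explicit two steps (monotonicity and membership of $G_s$ in $\mathcal{E}^s_n$) that the paper's terse proof leaves implicit.
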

\begin{proof}
By the inequality $\kappa(G)\leq\kappa'(G)\leq \delta(G)$, we have $G\in\bigcup_{k=1}^s\mathcal{V}^k_n$. Since $G$ is a bipartite graph, we have $\delta(G)\leq\frac{n}{2}$. Combining $\kappa(G)\leq\kappa'(G)\leq \delta(G)$, Lemma \ref{lem2.1} and Theorem \ref{2.6}, we have
\begin{wst}
\item[{\rm (i)}]$G\cong O_s\vee_1(K_1\bigcup K_{\frac{n-1}{2},\frac{n-1-2s}{2}})$ if $n$ is odd,
\item[{\rm (ii)}]$G\cong K_{\frac{n}{2}, \frac{n}{2}}$ if $n$ is even and $s=\frac{n}{2}$; $G\cong O_s\vee_1(K_1\bigcup K_{\frac{n}{2},\frac{n-2-2s}{2}})$ if $n$ is even and $s\leq\frac{n}{2}-1$.
\end{wst}
\end{proof}

This completes the proof.

\end{document}